\newtheorem{definition}{Definition}
\newtheorem{theorem}[definition]{Theorem}
\begin{document}

\title{{\LARGE \textbf{Homogenization of a non standard transmission problem}%
}}
\date{}
\author{\hspace*{-10pt}
\begin{minipage}[t]{2.7in} \normalsize \baselineskip 12.5pt
\centerline{ABDELHAMID AINOUZ}
\centerline{Laboratory AMNEDP, Faculty of Mathematics}
\centerline{University of sciences
and technology, Houari Boumedienne}
\centerline{Po Box 32 El Alia, Algiers}
\centerline{ALGERIA}
\centerline{aainouz@usthb.dz}
\end{minipage} \kern 0in \\
\\
\hspace*{-10pt}
\begin{minipage}[b]{6.9in} \normalsize
\baselineskip 12.5pt {\it Abstract:}
In this paper, periodic homogenization of a steady heat flow in
two-component media with highly adhesive contact is performed via the
two-scale convergence technique. Our micro-model is based on mass
conservation for the heat flow in each phase with interfacial contact of
adhesive type between these constituents. It is shown that the macroscopic
model is a single phase elliptic equation.
\\ [4mm] {\it Key--Words:}
Homogenization, Two-scale convergence, non standard transmission conditions
\end{minipage}
\vspace{-10pt}}
\maketitle

\baselineskip 12.5pt %
%
%

\thispagestyle{empty} \pagestyle{empty}
%
%

\section{Introduction\label{s1}}

Diffusion processes in multi-component media with non standard transmission
conditions play an important role in many areas of mechanical engineering
such as reservoir petroleum, biomechanics, geophysics,... In this short
paper, we shall deal with the homogenization of a steady heat flow in media
made of two interacting systems with an interfacial barrier leading to a
jump of heat flux (see for instance \cite{c22drm, c22drm2, c22mis, c22mis1,
c22mis2, c22rc} and the references therein). Our micro-model is a two-phase
elliptic system consisting of two equations as a result of two different
components and based on mass conservation for the heat flow in each phase,
combined with the Fick's law and non standard transmission interfacial
conditions between these constituents. The macro-model is derived by means
of the two-scale convergence method \cite{all}. It is shown that the overall
behavior of the heat diffusion process such media obeys to a single phase
equation.

The paper is organized as follows. In section \ref{c22s2}, we set our
microscopic model. We also give an existence and uniqueness result. At the
end of this section we state the main result of this paper: Theorem \ref%
{c22thp}. Section \ref{c22s3} is devoted to the proof of this Theorem.

\section{Setting of the Problem and the main result\label{c22s2}}

We consider $\Omega $\ a bounded and smooth domain of $\mathbb{R}^{N}\ $($%
N\geq 2$) and $Y=]0,1[^{N}$\ the generic cell of periodicity. Let $%
Y_{1},Y_{2}\subset Y$ be\ two open disjoint subsets of $Y$ such that $%
Y=Y_{1}\cup Y_{2}\cup \Sigma $ where $\Sigma =\partial Y_{1}\cap \partial
Y_{2}$, assumed to be a smooth submanifold. We denote $\nu $ the unit normal
of $\Sigma $, outward to $Y_{1}$. For $i=1,2,$ let $\chi _{i}$ denote the
characteristic function of $Y_{i}$, extended by $Y$-periodicity to $\mathbb{R%
}^{N}$. For $\varepsilon >0$, we set%
\begin{equation*}
\Omega _{i}^{\varepsilon }=\{x\in \Omega :\chi _{i}(\frac{x}{\varepsilon }%
)=1\}\ \ \ i=1,2
\end{equation*}%
and\ $\Sigma ^{\varepsilon }=\partial \Omega _{1}^{\varepsilon }\cap
\partial \Omega _{2}^{\varepsilon }$. To avoid some\ unnecessary technical
computations, we assume that $\overline{\Omega _{2}^{\varepsilon }}\subset
\Omega $ so that $\Sigma ^{\varepsilon }=\partial \Omega _{2}^{\varepsilon }$
and $\partial \Omega _{1}^{\varepsilon }=\partial \Omega \cup \Sigma
^{\varepsilon }$. Let $Z_{i}=\cup _{k\in \mathbb{Z}^{N}}\left(
Y_{i}+k\right) $. As in \cite{all}, we also assume that\ $Z_{1}$ is smooth
and a connected open subset of $\mathbb{R}^{N}$. Let $A_{1}$ (resp. $A_{2}$)
denote the permeability of the medium $Z_{1}$ (resp. $Z_{2}$). Let $f_{i}$
be a measurable function representing the thermal source density in the
material $\Omega _{i}^{\varepsilon }$. Finally, let $\gamma $ be the \textit{%
non-rescaled} conductivity of the thin layer $\Sigma ^{\varepsilon }$. We
shall assume the followings:

\begin{enumerate}
\item[H1)] The conductivity tensors $A_{1}$ and $A_{2}$ are continuous on $%
\mathbb{R}^{N}$, $Y-$periodic and satisfy the ellipticity condition:\textbf{%
\ }%
\begin{equation*}
A_{1}\xi \cdot \xi \geq C|\xi |^{2},\ \text{(resp. }A_{2}\xi \cdot \xi \geq
C|\xi |^{2}\text{)}\ \ \ \ \ \forall \xi \in \mathbb{R}^{N}
\end{equation*}%
where, here and in the rest of the paper, $C$ denotes any positive constant
independent of $\varepsilon $;

\item[H2)] The source terms $f_{1}$and $f_{2}$ lie in $L^{2}\left( \Omega
\right) $;

\item[H3)] The conductivity $\gamma $ is a continous function on $\mathbb{R}%
^{N}$, $Y-$periodic and bounded from below:%
\begin{equation*}
\gamma \left( y\right) \geq C>0,\ \ \ \ \text{ }\forall y\in \mathbb{R}^{N}.
\end{equation*}
\end{enumerate}

In the sequel, we shall denote for $x\in \mathbb{R}^{N}$,
\begin{equation*}
\chi _{i}^{\varepsilon }\left( x\right) =\chi _{i}\left( \frac{x}{%
\varepsilon }\right) ,\ \ A_{i}^{\varepsilon }\left( x\right) =A_{i}\left(
\frac{x}{\varepsilon }\right)
\end{equation*}%
and
\begin{equation*}
\gamma ^{\varepsilon }\left( x\right) =\varepsilon \gamma \left( \frac{x}{%
\varepsilon }\right) .
\end{equation*}%
In this paper, we shall study the multiscale modelling of the following set
of equations:
\begin{subequations}
\begin{align}
& -\mathrm{div}\left( A_{1}^{\varepsilon }\nabla u_{1}^{\varepsilon }\right)
=f_{1}\text{ in }\Omega _{1}^{\varepsilon }\text{,}  \label{c221b} \\
& -\mathrm{div}\left( A_{2}^{\varepsilon }\nabla u_{2}^{\varepsilon }\right)
=f_{2}\text{ in }\Omega _{2}^{\varepsilon }\text{,}  \label{c221a} \\
& A_{1}^{\varepsilon }\nabla u_{1}^{\varepsilon }\cdot \nu ^{\varepsilon
}-A_{2}^{\varepsilon }\nabla u_{2}^{\varepsilon }\cdot \nu ^{\varepsilon
}=\gamma ^{\varepsilon }\text{ on }\Sigma ^{\varepsilon }\text{,}
\label{c221c} \\
& u_{1}^{\varepsilon }=u_{2}^{\varepsilon }\text{ on }\Sigma ^{\varepsilon }%
\text{,}  \label{c221d} \\
& u^{\varepsilon }=0\text{ on }\partial \Omega \   \label{c221e}
\end{align}%
where $\nu ^{\varepsilon }$ stands for the unit normal of $\Sigma
^{\varepsilon }$ outward to $\Omega _{1}^{\varepsilon }$. Here, $\Omega
_{i}^{\varepsilon }$ represents the region with conductivity $%
A_{i}^{\varepsilon }$. In this connection, the quantitities $%
u_{1}^{\varepsilon }$ and $u_{2}^{\varepsilon }$ represent the temperature
in $\Omega _{1}^{\varepsilon }$ and $\Omega _{2}^{\varepsilon }$,
respectively. The equations (\ref{c221b}) and (\ref{c221a}) express the
steady heat diffusion of the temperature in $\Omega _{1}^{\varepsilon }$ and
$\Omega _{2}^{\varepsilon }$ respectively, with Fick's law $%
J_{i}^{\varepsilon }=A_{i}\nabla u_{i}^{\varepsilon }$ where $%
J_{i}^{\varepsilon }$ is the diffusion flux. Here we consider the problem
that describes steady heat diffusion processes in which at the local scale
there is a jump of diffusion fluxes with continuous temperature between the
two constituents due to the fact that the interface $\Sigma ^{\varepsilon }$
is an active membrane. Similar phenomena are also observed in chemical
reactions \cite{c22bimp}, \cite{c22cy}, \cite{c22pan}, continuum mechanics.
Finally, (\ref{c221e}) is the homogeneous Dirichlet condition on the
exterior boundary of $\Omega $. Non standard transmission conditions (\ref%
{c221c}) are well-studied, see \cite{c22mis, c22mis1, c22mis2, c22rc}. These
conditions modelizes for example the contact of two structures which are
glued together and in which one of those component is a thin adhesive layer
that is thermically or mechanically very different from the other one. In
mechanical engineering, such structures are commonly used in manufacturing
new materials. The numerical treatment of the corresponding mathematical
model is still intractable and requires the use of a high number of meshes
because of the presence of this layer, and thus leads to numerical
instability (see, e.g., Babuska and Suri \cite{c22bs}). Therefore, it is
more suitable from the numerical and modeling point of views to replace the
thin layer by an interface with zero thickness. This can be done for example
by performing an asymptotic method which takes into all the physical
properties of the layer, see for instance \cite{c22mis} and the references
therein.

Let us denote for convenience
\end{subequations}
\begin{equation*}
A^{\varepsilon }=\chi _{1}^{\varepsilon }A_{1}^{\varepsilon }+\chi
_{2}^{\varepsilon }A_{2}^{\varepsilon }\text{ and }f^{\varepsilon }=\chi
_{1}^{\varepsilon }f_{1}^{\varepsilon }+\chi _{2}^{\varepsilon
}f_{2}^{\varepsilon }.
\end{equation*}

The weak formulation of (\ref{c221b})-(\ref{c221e}) is as follows\textbf{: }%
find $\ u^{\varepsilon }=\chi _{1}^{\varepsilon }u_{1}^{\varepsilon }+\chi
_{2}^{\varepsilon }u_{2}^{\varepsilon }\in H_{0}^{1}\left( \Omega \right) $,
such that for all $v\in H_{0}^{1}\left( \Omega \right) $, we have
\begin{equation}
\left. \int_{\Omega }A^{\varepsilon }\nabla u^{\varepsilon }\nabla v\mathrm{d%
}x\ =\int_{\Omega }f^{\varepsilon }v\mathrm{d}x+\int_{\Sigma ^{\varepsilon
}}\gamma ^{\varepsilon }v\mathrm{d}s^{\varepsilon }\right.  \label{c22fv}
\end{equation}%
where $\mathrm{d}x$ and $\mathrm{d}s^{\varepsilon }$ denote respectively the
Lebesgue measure on $\mathbb{R}^{N}$ and the Hausdorff measure on $\Sigma
^{\varepsilon }$.

Now we give an existence and uniqueness result of (\ref{c22fv})

\begin{theorem}
Let assumptions H1)-H3) be fulfilled. Then, for any sufficiently small $%
\varepsilon >0$, there exists a unique couple $u^{\varepsilon }\in
H_{0}^{1}\left( \Omega \right) $, solution of the weak problem (\ref{c22fv}%
), such that
\begin{equation}
\Vert u^{\varepsilon }\Vert _{H_{0}^{1}\left( \Omega \right) }\leq C\text{.}
\label{c2230}
\end{equation}
\end{theorem}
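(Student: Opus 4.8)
The plan is to apply the Lax--Milgram theorem to the bilinear form $a^{\varepsilon}(u,v)=\int_{\Omega}A^{\varepsilon}\nabla u\cdot\nabla v\,\mathrm{d}x$ and the linear functional $L^{\varepsilon}(v)=\int_{\Omega}f^{\varepsilon}v\,\mathrm{d}x+\int_{\Sigma^{\varepsilon}}\gamma^{\varepsilon}v\,\mathrm{d}s^{\varepsilon}$ on the Hilbert space $H_{0}^{1}(\Omega)$. Continuity and coercivity of $a^{\varepsilon}$ are immediate from H1): boundedness gives $|a^{\varepsilon}(u,v)|\leq C\Vert u\Vert_{H_{0}^{1}}\Vert v\Vert_{H_{0}^{1}}$, while the ellipticity bound together with the Poincar\'e inequality yields $a^{\varepsilon}(u,u)\geq C\Vert\nabla u\Vert_{L^{2}}^{2}\geq C\Vert u\Vert_{H_{0}^{1}}^{2}$ with a constant independent of $\varepsilon$. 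So coercivity is uniform and poses no difficulty.

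The only genuine obstacle is the continuity of the surface integral term $v\mapsto\int_{\Sigma^{\varepsilon}}\gamma^{\varepsilon}v\,\mathrm{d}s^{\varepsilon}$ as a functional on $H_{0}^{1}(\Omega)$, since one must control a trace on the oscillating interface $\Sigma^{\varepsilon}$. The key tool here is a scaled trace inequality of the form
\begin{equation*}
\varepsilon\int_{\Sigma^{\varepsilon}}|v|^{2}\,\mathrm{d}s^{\varepsilon}\leq C\left(\Vert v\Vert_{L^{2}(\Omega)}^{2}+\varepsilon^{2}\Vert\nabla v\Vert_{L^{2}(\Omega)}^{2}\right)\leq C\Vert v\Vert_{H_{0}^{1}(\Omega)}^{2},
\end{equation*}
valid for small $\varepsilon$ with $C$ independent of $\varepsilon$. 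This standard estimate is proved by a change of variables $y=x/\varepsilon$ reducing to a trace inequality on the reference cell $Y$ with interface $\Sigma$, then summing over the $O(\varepsilon^{-N})$ cells covering $\Omega$; the factor $\varepsilon$ on the left is exactly the scaling that matches the weight $\gamma^{\varepsilon}=\varepsilon\gamma(x/\varepsilon)$. Combining this with the boundedness of $\gamma$ from H3) and Cauchy--Schwarz gives
\begin{equation*}
\left|\int_{\Sigma^{\varepsilon}}\gamma^{\varepsilon}v\,\mathrm{d}s^{\varepsilon}\right|\leq\left(\varepsilon\int_{\Sigma^{\varepsilon}}\gamma^{2}\,\mathrm{d}s^{\varepsilon}\right)^{1/2}\left(\varepsilon\int_{\Sigma^{\varepsilon}}|v|^{2}\,\mathrm{d}s^{\varepsilon}\right)^{1/2}\leq C\Vert v\Vert_{H_{0}^{1}(\Omega)},
\end{equation*}
where the first factor is $O(1)$ because $\varepsilon|\Sigma^{\varepsilon}|=O(1)$. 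Hence $L^{\varepsilon}$ is a bounded linear functional with norm bounded uniformly in $\varepsilon$.

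With continuity and uniform coercivity of $a^{\varepsilon}$ and uniform boundedness of $L^{\varepsilon}$ established, Lax--Milgram delivers existence and uniqueness of $u^{\varepsilon}\in H_{0}^{1}(\Omega)$. For the a priori estimate \eqref{c2230}, I would test the weak formulation \eqref{c22fv} with $v=u^{\varepsilon}$, bound the left side below by $C\Vert u^{\varepsilon}\Vert_{H_{0}^{1}}^{2}$ via coercivity, and bound the right side above by $C\Vert u^{\varepsilon}\Vert_{H_{0}^{1}}$ using H2), the scaled trace inequality, and Cauchy--Schwarz; dividing through by $\Vert u^{\varepsilon}\Vert_{H_{0}^{1}}$ then gives the bound with $C$ independent of $\varepsilon$, as required. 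The main point to emphasize is that all constants arising from the surface term are controlled precisely by the $\varepsilon$-scaling of $\gamma^{\varepsilon}$, which is what makes the estimate uniform.
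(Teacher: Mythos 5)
Your proof is correct and takes essentially the same route as the paper: Lax--Milgram applied to the bilinear form, with the only delicate point being the uniform continuity of the functional $v\mapsto\int_{\Omega}f^{\varepsilon}v\,\mathrm{d}x+\int_{\Sigma^{\varepsilon}}\gamma^{\varepsilon}v\,\mathrm{d}s^{\varepsilon}$, which you handle by the rescaled trace inequality --- precisely the ``trace Theorem and rescaling technique'' the paper invokes. In fact the paper's proof breaks off at exactly that point, so your argument supplies in full the details (the $\varepsilon$-scaled trace bound, the $\varepsilon|\Sigma^{\varepsilon}|=O(1)$ estimate, and the final energy estimate) that the paper only gestures at.
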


\noindent \textbf{Proof:} \ It is a straightforward application of the Lax
Milgram Theorem. The only point to show is the continuity of the form
\begin{equation*}
v\longmapsto \int_{\Omega }f^{\varepsilon }v\mathrm{d}x+\int_{\Sigma
^{\varepsilon }}\gamma ^{\varepsilon }v\mathrm{d}s^{\varepsilon }
\end{equation*}%
on $H_{0}^{1}\left( \Omega \right) $. Indeed, let $v\in H_{0}^{1}\left(
\Omega \right) $. From the trace Theorem and by the rescaling technique (see
for example \cite{ep}). \hfill $\sqcap\!\!\!\!\sqcup$

Now, we are ready to state the main result of the paper:

\begin{theorem}
\label{c22thp}Let $u^{\varepsilon }\in H_{0}^{1}\left( \Omega \right) $ be
the unique solution of the weak system (\ref{c22fv}). Then, there exist a
unique $u\in H^{1}\left( \Omega \right) $ and a subsequence of $\left(
u^{\varepsilon }\right) $ still denoted $\left( w^{\varepsilon }\right) $
such that $w^{\varepsilon }$ converges weakly in $H^{1}\left( \Omega \right)
$ to $u$ which is the solution of the homogenized model:
\begin{equation}
\left\{
\begin{array}{l}
-\mathrm{div}\left( A^{\hom }\nabla u\right) =F\text{ in }\Omega , \\
\  \\
u=0\text{ on }\partial \Omega%
\end{array}%
\right.  \label{c22hs}
\end{equation}%
where $A^{\hom }$ and$\ F$ are given in (\ref{c22a}) and (\ref{c22b})
respectively.
\end{theorem}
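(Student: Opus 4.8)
The plan is to carry out the two-scale convergence method of \cite{all}. I would begin from the uniform estimate $\Vert u^{\varepsilon}\Vert_{H_{0}^{1}(\Omega)}\leq C$ of the previous theorem. By the two-scale compactness theorem this produces a subsequence $(w^{\varepsilon})$ and limits $u\in H_{0}^{1}(\Omega)$ and $u_{1}\in L^{2}(\Omega ;H_{\mathrm{per}}^{1}(Y)/\mathbb{R})$ such that $w^{\varepsilon}\rightharpoonup u$ weakly in $H_{0}^{1}(\Omega)$, $w^{\varepsilon}$ two-scale converges to $u(x)$, and $\nabla w^{\varepsilon}$ two-scale converges to $\nabla u(x)+\nabla_{y}u_{1}(x,y)$. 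The crucial simplification is that, because of the highly adhesive (continuous-temperature) condition (\ref{c221d}), $u^{\varepsilon}$ lives in the single space $H_{0}^{1}(\Omega)$; hence no phase-dependent jump survives in the limit of the gradient, and the bulk coefficient reduces to the $Y$-periodic elliptic tensor $A(y)=\chi_{1}(y)A_{1}(y)+\chi_{2}(y)A_{2}(y)$.

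Next I would insert into (\ref{c22fv}) the oscillating test function $v^{\varepsilon}(x)=\varphi(x)+\varepsilon \psi(x)\zeta(x/\varepsilon)$ with $\varphi,\psi\in\mathcal{D}(\Omega)$ and $\zeta\in C_{\mathrm{per}}^{\infty}(Y)$, and pass to the limit term by term. Writing the bulk integral as $\int_{\Omega}\nabla w^{\varepsilon}\cdot(A^{\varepsilon})^{T}\nabla v^{\varepsilon}\,\mathrm{d}x$ and using that $(A^{\varepsilon})^{T}\nabla v^{\varepsilon}$ is an admissible test function converging to $A(y)^{T}(\nabla\varphi+\psi\nabla_{y}\zeta)$, the two-scale convergence of $\nabla w^{\varepsilon}$ yields the limit $\int_{\Omega}\int_{Y}A(y)(\nabla u+\nabla_{y}u_{1})\cdot(\nabla\varphi+\nabla_{y}(\psi\zeta))\,\mathrm{d}y\,\mathrm{d}x$ (recall $|Y|=1$). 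Since $\chi_{i}^{\varepsilon}\rightharpoonup |Y_{i}|$ weak-$*$ and $v^{\varepsilon}\to\varphi$ strongly in $L^{2}(\Omega)$, the volumetric source converges to $\int_{\Omega}(|Y_{1}|f_{1}+|Y_{2}|f_{2})\varphi\,\mathrm{d}x$.

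The delicate point, and the one I expect to be the main obstacle, is the interfacial integral $\int_{\Sigma^{\varepsilon}}\gamma^{\varepsilon}v^{\varepsilon}\,\mathrm{d}s^{\varepsilon}$. Here the non-rescaled choice $\gamma^{\varepsilon}=\varepsilon\gamma(x/\varepsilon)$ is exactly what is needed: the Hausdorff measure of $\Sigma^{\varepsilon}\cap\Omega$ grows like $\varepsilon^{-1}$, and the prefactor $\varepsilon$ compensates this blow-up. Using the periodic surface rescaling/averaging argument already employed for the existence result (see \cite{all, ep}), I would establish $\varepsilon\int_{\Sigma^{\varepsilon}}\gamma(x/\varepsilon)\varphi(x)\,\mathrm{d}s^{\varepsilon}\to\int_{\Omega}\varphi(x)\big(\int_{\Sigma}\gamma(y)\,\mathrm{d}s(y)\big)\,\mathrm{d}x$, while the corrector part, carrying an extra factor $\varepsilon$, contributes $O(\varepsilon)$ and drops out. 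This surface term is precisely the genuinely non-standard contribution to the homogenized right-hand side $F$.

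Collecting the three limits gives the two-scale variational identity, valid for all $\varphi\in H_{0}^{1}(\Omega)$ and all admissible correctors. Choosing $\varphi=0$ isolates the cell equation $-\mathrm{div}_{y}(A(y)(\nabla u+\nabla_{y}u_{1}))=0$ in $Y$, whose solution I would write as $u_{1}(x,y)=\sum_{j}\partial_{x_{j}}u(x)\,\omega^{j}(y)$ in terms of the standard correctors $\omega^{j}\in H_{\mathrm{per}}^{1}(Y)/\mathbb{R}$ solving $-\mathrm{div}_{y}(A(\nabla_{y}\omega^{j}+e_{j}))=0$. Substituting back and then taking $\psi=0$ decouples the macroscopic equation, with $A_{ij}^{\hom}=\int_{Y}A(y)(\nabla_{y}\omega^{j}+e_{j})\cdot(\nabla_{y}\omega^{i}+e_{i})\,\mathrm{d}y$ as in (\ref{c22a}) and $F=|Y_{1}|f_{1}+|Y_{2}|f_{2}+\int_{\Sigma}\gamma\,\mathrm{d}s$ as in (\ref{c22b}). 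Ellipticity of $A$ passes to $A^{\hom}$, so Lax--Milgram gives a unique $u\in H_{0}^{1}(\Omega)$ solving (\ref{c22hs}); since this limit does not depend on the extracted subsequence, the whole family converges and the proof is finished.
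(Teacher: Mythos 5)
Your proposal follows essentially the same route as the paper: extract a two-scale limit $(u,u^{*})$ from the uniform bound, test (\ref{c22fv}) with oscillating functions $\varphi+\varepsilon\varphi^{*}$, pass to the limit using admissibility of ${}^{t}A\,\nabla\varphi$-type test functions for the bulk term and the periodic-surface two-scale convergence (Theorem \ref{c22r1}) for the $\gamma^{\varepsilon}$ term, then decouple the resulting two-scale system via the cell correctors $\omega_{j}$ to obtain (\ref{c22a}) and (\ref{c22b}). The only cosmetic differences (tensor-product correctors $\psi(x)\zeta(y)$ instead of general $\varphi^{*}(x,y)$, and the closing remark that uniqueness of the limit upgrades subsequence convergence to convergence of the whole family) do not change the argument, so your proof is correct and matches the paper's.
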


The remainder of this paper is devoted to the proof of this Theorem. To
prove this result we shall employ the two-scale convergence technique.

\begin{definition}[G. Allaire \protect\cite{all}]
\label{c22def1}We say that a sequence $\left( v^{\varepsilon }\right) \ $in $%
L^{2}\left( \Omega \right) $ is two-scale convergent to $v_{0}\in
L^{2}\left( \Omega \times Y\right) $ (we write $v^{\varepsilon }\overset{2-s}%
{\rightharpoonup }v_{0}$) if, for any admissible test function $\varphi \in
L^{2}\left( \Omega ;\mathcal{C}_{\#}(Y)\right) $,
\begin{equation*}
\lim_{\varepsilon \rightarrow 0}\int_{\Omega }v^{\varepsilon }\left(
x\right) \varphi \left( x,\frac{x}{\varepsilon }\right)=\int_{\Omega \times
Y}v_{0}\left( x,y\right) \varphi \left( x,y\right)
\end{equation*}%
where $\mathcal{C}_{\#}(Y)$ is the space of all continuous functions on $%
\mathbb{R}^{N}$ which are $Y$-periodic.
\end{definition}

Let $L_{\#}^{2}\left( Y\right) $ be the space of all $Y$-periodic functions
belonging to $L_{\mathrm{loc}}^{2}\left( \mathbb{R}^{N}\right) $ and $%
H_{\#}^{1}\left( Y\right) $ denotes the space of those functions together
with their derivatives belonging to $L_{\#}^{2}\left( Y\right) $. The
following result concerns the two-scale convergence of uniformly bounded
sequences in the Sobolev space $H^{1}(\Omega )$. See \cite{all, adh}.

\begin{theorem}
\label{c22r1}\ Let $(v^{\varepsilon })$ be a uniformly bounded sequence in $%
H^{1}(\Omega )$ (resp. $H_{0}^{1}(\Omega )$). Then there exists $v_{0}\in
H^{1}\left( \Omega \right) $ (resp. $H_{0}^{1}(\Omega )$) and $v^{\ast }\in
L^{2}(\Omega ;H_{\#}^{1}(Y)/\mathbb{R})$ such that, up to a subsequence, $%
v^{\varepsilon }\overset{2-s}{\rightharpoonup }v_{0}$ and $\nabla
v^{\varepsilon }\overset{2-s}{\rightharpoonup }\nabla v_{0}+\nabla
_{y}v^{\ast }$. Furthermore for every $\varphi \in \mathcal{D}\left( \Omega ;%
\mathcal{C}_{\#}^{\infty }(Y)\right) $ we have
\begin{equation*}
\lim_{\varepsilon \rightarrow 0}\varepsilon \int_{\Sigma ^{\varepsilon
}}v^{\varepsilon }\varphi ^{\varepsilon }\mathrm{d}s^{\varepsilon
}=\int_{\Omega \times \Sigma }v_{0}\varphi \mathrm{d}x\mathrm{d}s
\end{equation*}%
where $\mathcal{C}_{\#}^{\infty }\left( Y\right) =\mathcal{C}^{\infty }(%
\mathbb{R}^{N})\cap \mathcal{C}_{\#}(Y)$ and $\varphi ^{\varepsilon }\left(
x\right) =\varphi \left( x,\frac{x}{\varepsilon }\right) $.
\end{theorem}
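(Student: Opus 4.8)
The plan is to prove the statement in two stages: first the two-scale compactness of the pair $\left(v^{\varepsilon},\nabla v^{\varepsilon}\right)$, and then the boundary convergence on $\Sigma^{\varepsilon}$. For the first stage I would start from the basic two-scale compactness principle behind Definition \ref{c22def1}: since $\left(v^{\varepsilon}\right)$ is bounded in $H^{1}(\Omega)$, both $\left(v^{\varepsilon}\right)$ and $\left(\nabla v^{\varepsilon}\right)$ are bounded in $L^{2}(\Omega)$, so the uniformly bounded linear functionals $\varphi\mapsto\int_{\Omega}v^{\varepsilon}\varphi^{\varepsilon}\,\mathrm{d}x$ on the separable space $L^{2}(\Omega;\mathcal{C}_{\#}(Y))$ admit, along a diagonal subsequence, limits representable by elements of $L^{2}(\Omega\times Y)$. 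This yields $v_{0}\in L^{2}(\Omega\times Y)$ and $\xi_{0}\in L^{2}(\Omega\times Y)^{N}$ with $v^{\varepsilon}\overset{2-s}{\rightharpoonup}v_{0}$ and $\nabla v^{\varepsilon}\overset{2-s}{\rightharpoonup}\xi_{0}$ along a common subsequence. By weak compactness in $H^{1}(\Omega)$ the same subsequence converges weakly to some limit in $H^{1}(\Omega)$ (resp. $H_{0}^{1}(\Omega)$), which is precisely the $Y$-average of the two-scale limit, so the identification of $v_{0}$ as that $H^{1}$-limit is forced.

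Second, I would identify the structure of the limits by testing against oscillating vector fields $\Phi\in\mathcal{D}(\Omega;\mathcal{C}_{\#}^{\infty}(Y))^{N}$, using $\mathrm{div}(\Phi^{\varepsilon})=(\mathrm{div}_{x}\Phi)^{\varepsilon}+\varepsilon^{-1}(\mathrm{div}_{y}\Phi)^{\varepsilon}$. The identity $\varepsilon\int_{\Omega}\nabla v^{\varepsilon}\cdot\Phi^{\varepsilon}\,\mathrm{d}x=-\varepsilon\int_{\Omega}v^{\varepsilon}(\mathrm{div}_{x}\Phi)^{\varepsilon}\,\mathrm{d}x-\int_{\Omega}v^{\varepsilon}(\mathrm{div}_{y}\Phi)^{\varepsilon}\,\mathrm{d}x$, whose left-hand side is $\varepsilon$ times a bounded quantity and hence vanishes, forces $\int_{\Omega\times Y}v_{0}\,\mathrm{div}_{y}\Phi\,\mathrm{d}x\,\mathrm{d}y=0$ for all $\Phi$, i.e. $\nabla_{y}v_{0}=0$, so $v_{0}=v_{0}(x)$ is $y$-independent and coincides with the weak $H^{1}$-limit. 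Restricting next to fields with $\mathrm{div}_{y}\Phi=0$ and passing to the two-scale limit in $\int_{\Omega}\nabla v^{\varepsilon}\cdot\Phi^{\varepsilon}\,\mathrm{d}x=-\int_{\Omega}v^{\varepsilon}(\mathrm{div}_{x}\Phi)^{\varepsilon}\,\mathrm{d}x$ gives $\int_{\Omega\times Y}(\xi_{0}-\nabla_{x}v_{0})\cdot\Phi\,\mathrm{d}x\,\mathrm{d}y=0$ for every $y$-solenoidal $\Phi$; by the orthogonal decomposition of $L^{2}(Y)^{N}$ into solenoidal fields and gradients of $Y$-periodic functions, this yields $\xi_{0}=\nabla_{x}v_{0}+\nabla_{y}v^{\ast}$ for some $v^{\ast}\in L^{2}(\Omega;H_{\#}^{1}(Y)/\mathbb{R})$, which is the first assertion.

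Third, and this is the core of the argument, I would prove the surface convergence by converting the oscillating boundary integral into bulk integrals. Since $\Sigma$ is a smooth submanifold, the unit normal $\nu$ admits a $Y$-periodic smooth extension $\tilde{\nu}\in\mathcal{C}_{\#}^{\infty}(Y)^{N}$ with $\tilde{\nu}=\nu$ on $\Sigma$; for $\varphi\in\mathcal{D}(\Omega;\mathcal{C}_{\#}^{\infty}(Y))$ set $\Phi(x,y)=\varphi(x,y)\tilde{\nu}(y)$, an admissible test field. Applying the divergence theorem to $v^{\varepsilon}\Phi^{\varepsilon}$ on $\Omega_{2}^{\varepsilon}$, whose boundary by the hypothesis $\overline{\Omega_{2}^{\varepsilon}}\subset\Omega$ is exactly $\Sigma^{\varepsilon}$ with outward normal $-\nu^{\varepsilon}$, and multiplying by $\varepsilon$, gives $-\varepsilon\int_{\Sigma^{\varepsilon}}v^{\varepsilon}\varphi^{\varepsilon}\,\mathrm{d}s^{\varepsilon}=\varepsilon\int_{\Omega_{2}^{\varepsilon}}\nabla v^{\varepsilon}\cdot\Phi^{\varepsilon}\,\mathrm{d}x+\varepsilon\int_{\Omega_{2}^{\varepsilon}}v^{\varepsilon}(\mathrm{div}_{x}\Phi)^{\varepsilon}\,\mathrm{d}x+\int_{\Omega_{2}^{\varepsilon}}v^{\varepsilon}(\mathrm{div}_{y}\Phi)^{\varepsilon}\,\mathrm{d}x$. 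The first two terms are $\varepsilon$ times bounded quantities and vanish, while the third two-scale converges to $\int_{\Omega}v_{0}(x)\int_{Y_{2}}\mathrm{div}_{y}\Phi(x,y)\,\mathrm{d}y\,\mathrm{d}x$; the divergence theorem on $Y_{2}$ (outward normal $-\nu$) turns the inner integral into $-\int_{\Sigma}\varphi\,\mathrm{d}s$. Collecting the terms and changing sign produces exactly $\varepsilon\int_{\Sigma^{\varepsilon}}v^{\varepsilon}\varphi^{\varepsilon}\,\mathrm{d}s^{\varepsilon}\to\int_{\Omega\times\Sigma}v_{0}\varphi\,\mathrm{d}x\,\mathrm{d}s$.

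The main obstacle I anticipate lies in this last stage: one must justify the $Y$-periodic smooth extension $\tilde{\nu}$ of the normal and, more delicately, the passage to the two-scale limit of $v^{\varepsilon}$ against the oscillating test function restricted to the perforated domain $\Omega_{2}^{\varepsilon}$ (equivalently, of the product $\chi_{2}^{\varepsilon}v^{\varepsilon}$ against a smooth field). To guarantee that the boundary functionals are uniformly bounded and that the limit is genuinely represented by $v_{0}$ rather than by a spurious boundary-concentrated measure, I would also record the rescaled trace estimate $\varepsilon\int_{\Sigma^{\varepsilon}}|v^{\varepsilon}|^{2}\,\mathrm{d}s^{\varepsilon}\le C\Vert v^{\varepsilon}\Vert_{H^{1}(\Omega)}^{2}$, obtained cell-by-cell from the standard trace inequality after the change of variables $y=x/\varepsilon$. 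The volume identifications in the first two stages are, by contrast, routine applications of the $\varepsilon$-scaling trick and the solenoidal-field characterization.
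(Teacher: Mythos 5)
Your proposal is correct, but a comparison with ``the paper's proof'' has to start from the observation that the paper proves nothing here: Theorem \ref{c22r1} is stated with a bare citation to \cite{all, adh}. Measured against those sources, your first two stages are exactly Allaire's compactness and identification argument from \cite{all} (diagonal extraction via bounded functionals on the separable space $L^{2}(\Omega ;\mathcal{C}_{\#}(Y))$, the $\varepsilon$-scaling trick to kill $\nabla _{y}v_{0}$, then testing with $y$-solenoidal fields and the Helmholtz decomposition of $L_{\#}^{2}(Y)^{N}$ to produce $\nabla v_{0}+\nabla _{y}v^{\ast }$), so nothing new there. Where you genuinely diverge is the surface term: \cite{adh} develops two-scale convergence on periodic surfaces, i.e.\ it first shows the rescaled surface measures $\varepsilon \,\mathrm{d}s^{\varepsilon }$ converge to $\mathrm{d}x\otimes \mathrm{d}s$, uses the rescaled trace inequality to extract a surface two-scale limit, and then identifies that limit with the trace of the volume limit; you instead prove only the specific statement needed, by extending $\nu $ to a smooth $Y$-periodic field $\tilde{\nu}$ (tubular neighbourhood of the compact hypersurface $\Sigma $ in the torus), writing $\varepsilon \int_{\Sigma ^{\varepsilon }}v^{\varepsilon }\varphi ^{\varepsilon }\mathrm{d}s^{\varepsilon }$ exactly as bulk integrals over $\Omega _{2}^{\varepsilon }$ via the divergence theorem, and letting the $\varepsilon ^{-1}(\mathrm{div}_{y}\Phi )^{\varepsilon }$ term carry the limit. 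Your orientation bookkeeping is right ($-\nu ^{\varepsilon }$ outward to $\Omega _{2}^{\varepsilon }$, which uses the paper's hypothesis $\overline{\Omega _{2}^{\varepsilon }}\subset \Omega $, and the two sign changes cancel), and the cell computation $\int_{Y_{2}}\mathrm{div}_{y}\Phi \,\mathrm{d}y=-\int_{\Sigma }\varphi \,\mathrm{d}s$ is correct provided you work on the torus, where periodicity of $\Phi $ cancels any contribution of $\partial Y_{2}\cap \partial Y$ --- worth saying explicitly, since $Y_{2}$ need not be compactly contained in $Y$. The one step you flag as delicate, passing to the two-scale limit of $v^{\varepsilon }$ against $\chi _{2}(y)\,\mathrm{div}_{y}\Phi (x,y)$, is indeed the only real justification needed, and it goes through because this test function lies in $L_{\#}^{2}(Y;\mathcal{C}(\overline{\Omega }))$, an admissible class in Allaire's sense. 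Two small remarks: since your boundary identity is exact, the rescaled trace estimate you record is not actually needed for this particular limit (it is what \cite{adh} needs for their more general surface compactness, and it does no harm as a boundedness check); and your trade-off relative to \cite{adh} is that you get the stated convergence more directly but do not recover their stronger conclusion that traces of $H^{1}$-bounded sequences two-scale converge on $\Sigma ^{\varepsilon }$ against arbitrary oscillating surface test functions --- for the present paper only your weaker statement is used, so the proof is adequate as written.
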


\section{Proof of Theorem \protect\ref{c22thp}\label{c22s3}}

In this section, we shall determine the limiting problem (\ref{c22hs}).
First, thanks to the a priori estimates(\ref{c2230}) and Theorem \ref{c22r1}%
, there exist a subsequence of $\left( u^{\varepsilon }\right) $, still
denoted $\left( u^{\varepsilon }\right) $ and a unique $u\in H_{0}^{1}\left(
\Omega \right) $,$\ u^{\ast }\in L^{2}(\Omega ;H_{\#}^{1}\left( Y\right) /%
\mathbb{R})$ such that $u^{\varepsilon }\overset{2-s}{\rightharpoonup }u$
and $\nabla u^{\varepsilon }\overset{2-s}{\rightharpoonup }\nabla u+\nabla
_{y}u^{\ast }$. Now, let $\varphi \in \mathcal{D}\left( \Omega \right) $ and
$\varphi ^{\ast }\in \mathcal{D}\left( \Omega ;\mathcal{C}_{\#}^{\infty
}\left( Y\right) \right) $. Set $\varphi ^{\varepsilon }(x)=\varphi \left(
x\right) +\varepsilon \varphi ^{\ast }\left( x,\dfrac{x}{\varepsilon }%
\right) $ and take $v=\varphi ^{\varepsilon }$ as a function test in in (\ref%
{c22fv}), we obtain
\begin{equation}
\left.
\begin{array}{c}
\int_{\Omega }A^{\varepsilon }\nabla u^{\varepsilon }\left( \nabla \varphi
+\nabla _{y}\varphi ^{\ast }\left( x,\dfrac{x}{\varepsilon }\right) \right)
\mathrm{d}x= \\
\int_{\Omega }f^{\varepsilon }\varphi \mathrm{d}x+\int_{\Sigma ^{\varepsilon
}}\gamma ^{\varepsilon }\varphi \mathrm{d}s^{\varepsilon }+\varepsilon
R^{\varepsilon }%
\end{array}
\right.  \label{c2246}
\end{equation}%
where
\begin{equation*}
R^{\varepsilon }=\int_{\Omega }A^{\varepsilon }\nabla u^{\varepsilon }\nabla
_{x}\varphi ^{\ast }\left( x,\frac{x}{\varepsilon }\right) \mathrm{d}%
x+\int_{\Sigma ^{\varepsilon }}\gamma ^{\varepsilon }\varphi ^{\ast }\left(
x,\frac{x}{\varepsilon }\right) \mathrm{d}s^{\varepsilon }\text{.}
\end{equation*}%
According to the assumptions H1)-H3) the vectorial functions $\chi
_{i}\left( ^{t}A_{i}\nabla \varphi \right) \ $\ and $\chi _{i}\left(
^{t}A_{i}\nabla \varphi ^{\ast }\right) $ $i=1,2$ are admissible in the
sense that it can be used as test functions w.r.t. the notion of two scale
convergence. It follows that the limit of the l.h.s. of (\ref{c2246}) is
\begin{equation*}
\int_{\Omega \times Y}A\left( \nabla u+\nabla _{y}u^{\ast }\right) \left(
\nabla \varphi +\nabla _{y}\varphi ^{\ast }\right) \mathrm{d}x\mathrm{d}y
\end{equation*}%
and that
\begin{eqnarray*}
&&\lim_{\varepsilon \rightarrow 0}\left( \int_{\Omega }f^{\varepsilon
}\left( x\right) \varphi \left( x\right) \mathrm{d}x+\int_{\Sigma
^{\varepsilon }}\gamma ^{\varepsilon }\varphi \left( x\right) \mathrm{d}%
s^{\varepsilon }\right) \\
&=&\int_{\Omega }F\left( x\right) \varphi \left( x\right) \mathrm{d}x,
\end{eqnarray*}%
where
\begin{equation*}
A\left( y\right) =\chi _{1}\left( y\right) A_{1}\left( y\right) +\chi
_{2}\left( y\right) A_{2}\left( y\right)
\end{equation*}%
and
\begin{equation}
F\left( x\right) =|Y_{1}|f_{1}\left( x\right) +|Y_{2}|f_{2}\left( x\right)
+\int_{\Sigma }\gamma \left( y\right) \mathrm{d}s\left( y\right) .
\label{c22b}
\end{equation}%
Moreover, using again (\ref{c2230}), it is easy to check that $%
R^{\varepsilon }=O\left( 1\right) $. Thus, by collecting all the above
limits we get the two-scale variational formulation:
\begin{equation}
\left. \int_{\Omega \times Y}A\left( \nabla u+\nabla _{y}u^{\ast }\right)
\left( \nabla \varphi +\nabla _{y}\varphi ^{\ast }\right) =\int_{\Omega
}F\varphi \text{.}\right.  \label{c2250}
\end{equation}%
By a density argument, equation (\ref{c2250}) still holds true for any $%
\left( \varphi ,\varphi ^{\ast }\right) \in H_{0}^{1}\left( \Omega \right)
\times L^{2}\left( \Omega ;H_{\#}^{1}\left( Y\right) /\mathbb{R}\right) $.
Now, integrating by parts in (\ref{c2250}) yields the following two-scale
homogenized system:
\begin{eqnarray}
&&-\mathrm{div}_{y}\left( A\left( \nabla u+\nabla _{y}u^{\ast }\right)
\right) \text{%
$=$%
\ }0\text{ in }\Omega \times Y,  \label{c22474} \\
&&-\mathrm{div}\left( \int_{Y}A\left( \nabla u+\nabla _{y}u^{\ast }\right)
\right) \text{%
$=$%
\ }F\text{ in }\Omega ,  \label{c22476} \\
&&u\text{%
$=$%
\ }0\text{ on }\partial \Omega ,  \label{c22478} \\
&&y\longmapsto \ u^{\ast }\text{ }Y\text{-periodic.}  \label{c22481}
\end{eqnarray}%
Let us denote for $1\leq j\leq N$, $\omega _{j}\in H_{\#}^{1}\left(
Y_{1}\right) /\mathbb{R}$ the unique solution to the following cell problem:
\begin{equation*}
\left\{
\begin{array}{l}
-\mathrm{div}_{y}\left( A\left( \nabla _{y}\omega _{j}+e_{j}\right) \right) =%
\text{\ }0\text{ in }Y_{1},\ \ \ \ \ \ \text{,} \\
A\left( \nabla _{y}\omega _{j}+e_{j}\right) \cdot \nu =0\text{ on }\Sigma ,
\\
y\longmapsto \omega _{j}\text{ }Y\text{-periodic,}%
\end{array}%
\right.
\end{equation*}%
where $\left( e_{j}\right) $ is the canonical basis of $\mathbb{R}^{N}$.
Observe that Equations (\ref{c22474}), (\ref{c22478}) and (\ref{c22481})
lead to the following relation:
\begin{equation}
u^{\ast }\left( x,y\right) =\sum_{j=1}^{N}\frac{\partial u}{\partial x_{j}}%
\left( x\right) \omega _{j}\left( y\right) +\tilde{u}\left( x\right)
\label{c223m}
\end{equation}%
where $\tilde{u}\left( x\right) $ is any additive function independent of $y$%
.\ In the sequel, we shall denote for convenience
\begin{equation}
\begin{array}{c}
A^{\hom }=\left( a_{ij}\right) _{1\leq i,j\leq N}, \\
a_{ij}=\int_{Y}A\left( \nabla _{y}\omega _{i}+e_{i}\right) \cdot \left(
\nabla _{y}\omega _{j}+e_{j}\right) \mathrm{d}y,%
\end{array}
\label{c22a}
\end{equation}%
Let us mention that, in view of H1), $A^{\hom }$ is symmetric and positive
definite, see \cite{blp}. Inserting (\ref{c223m}) into (\ref{c22476})
together with (\ref{c22478}) yields the elliptic equation:
\begin{eqnarray}
&&-\mathrm{div}\left( A^{\hom }\nabla u\right) \text{%
$=$%
}F\text{ in }\Omega ,  \label{c22hs1} \\
&&u\text{%
$=$%
}0\text{ on }\partial \Omega .  \label{c22hs2}
\end{eqnarray}%
The proof of Theorem \ref{c22thp} is then achieved.

\vspace{10pt} \noindent \textbf{Acknowledgements:} \ The paper is a part of
the project N$^\circ$ B00002278 of the MESRS algerian office. The author is
indebted for their financial support.

\end{document}